\theoremstyle{definition}
\def\ind{\mathbh{1}}
\newtheorem{ex}{Example}
\newtheorem{remark}{Remark}
\theoremstyle{plain}
\newtheorem{thm}{Theorem}
\newtheorem{cor}{Corollary}
\begin{document}

\begin{frontmatter}
\pretitle{Research Article}

\title{Exponential bounds for the tail probability of the supremum of
an inhomogeneous random walk}

\author{\inits{D.}\fnms{Dominyka}~\snm{Kievinait\.{e}}\ead[label=e1]{d.kievinaite@gmail.com}}
\author{\inits{J.}\fnms{Jonas}~\snm{\v Siaulys}\thanksref{cor1}\thanksref{f1}\ead[label=e2]{jonas.siaulys@mif.vu.lt}}
\thankstext[type=corresp,id=cor1]{Corresponding author.}
\thankstext[id=f1]{The second author was supported by grant No S-MIP-17-72 from the Research Council of Lithuania.}
\address{Faculty of Mathematics and Informatics, \institution{Vilnius University}, Naugarduko 24, Vilnius~LT-03225, \cny{Lithuania}}

\markboth{D. Kievinait\.{e}, J. \v Siaulys}{Exponential bounds for the tail probability of the supremum of an inhomogeneous random walk}

\begin{abstract}
Let $\{\xi_1,\xi_2,\ldots\}$ be a sequence of
independent but not necessarily identically distributed random variables.
In this paper, the sufficient conditions are found under which the tail
probability $\mathbb{P} (\sup_{n\geqslant0}\sum_{i=1}^n\xi_i>x
)$ can be bounded above by $\varrho_1\exp\{-\varrho_2 x\}$ with some
positive constants $\varrho_1$ and $\varrho_2$. A way to calculate
these two constants is presented. The application of the derived bound
is discussed and a Lundberg-type inequality is obtained for the
ultimate ruin probability in the inhomogeneous renewal risk model
satisfying the net profit condition on average.
\end{abstract}

\begin{keywords}
\kwd{Exponential bound}
\kwd{supremum of sums}
\kwd{tail probability}
\kwd{risk model}
\kwd{inhomogeneity}
\kwd{ruin probability}
\kwd{Lundberg's inequality}
\end{keywords}

\begin{keywords}[MSC2010]%
\kwd{62E20}
\kwd{60E15}
\kwd{60G50}
\kwd{91B30}
\end{keywords}

\received{\sday{13} \smonth{10} \syear{2017}}
\revised{\sday{10} \smonth{2} \syear{2018}}
\accepted{\sday{13} \smonth{2} \syear{2018}}
\publishedonline{\sday{15} \smonth{3} \syear{2018}}
\end{frontmatter}

\section{Introduction}\label{ii}

Let $\{\xi_1,\xi_2,\ldots\}$ be a sequence of independent real-valued
random variables (r.v.'s), and let
\[
M_\infty=\sup\limits
_{n\geqslant0} \Biggl\{\sum\limits
_{k=1}^n \xi_k \Biggr\}.
\]
Here and subsequently, all empty sums are assumed to be zero.\vadjust{\eject}

Sgibnev \cite{sg-1997} generalized results by Kiefer and Wolfowitz
\cite{k+w-1956} obtaining the upper bound for the submultiplicative
moment $\mathbb{E} (\varphi(M_\infty) )$ in the case of
independent and identically distributed (i.i.d.) r.v.'s. In Theorem 2
of that paper, the following assertion is proved.
%
\begin{thm}\label{tt1}Let $\{\xi_1,\xi_2,\ldots\}$ be a
sequence of i.i.d. r.v.'s with distribution function (d.f.) $F$, and
let $\varphi$ be a nondecreasing submultiplicative function defined on
$[0,\infty)$. Then $\mathbb{E} (\varphi(M_\infty) )<\infty$ under
the following conditions:
\begin{eqnarray*}
&\bullet& \mathbb{E}\xi_1<0,
\\
&\bullet& \int_0^\infty\varphi(x)\overline{F}(x)\,{\rm d}x<\infty,
\\
&\bullet& \mathbb{E} \bigl({\rm e}^{r\xi_1} \bigr)<1 \quad \text{if}\ r:=\lim\limits_{x\rightarrow\infty}\frac{\log\varphi(x)}{x}>0 .
\end{eqnarray*}
\end{thm}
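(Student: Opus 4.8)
The plan is to reduce the claim to a statement about the ascending ladder height of the walk and then to a spectral-radius estimate in a weighted convolution setting. Since $\mathbb{E}\xi_1<0$, the walk $S_n=\sum_{k=1}^n\xi_k$ drifts to $-\infty$, so $M_\infty<\infty$ almost surely and its law admits the ladder decomposition
\[
\mathbb{P}(M_\infty\in\cdot)=(1-p)\sum_{n=0}^\infty G^{*n},
\]
where $G$ is the (defective) distribution of the first strict ascending ladder height, $p=G([0,\infty))<1$, and $G^{*n}$ is the $n$-fold convolution. Consequently $\mathbb{E}\varphi(M_\infty)=(1-p)\sum_{n\geqslant0}a_n$ with $a_n:=\int_0^\infty\varphi\,{\rm d}G^{*n}$.

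First I would exploit submultiplicativity. From $\varphi(x+y)\leqslant\varphi(x)\varphi(y)$ and Fubini's theorem one gets $a_{m+n}\leqslant a_m a_n$, so by Fekete's lemma $a_n^{1/n}\to\theta:=\inf_n a_n^{1/n}$, and the series $\sum_n a_n$ converges as soon as every $a_n$ is finite and $\theta<1$. Thus the theorem splits into two subgoals: (i) $a_1=\int_0^\infty\varphi\,{\rm d}G<\infty$, and (ii) $\theta<1$. For (i) I would compare the ladder tail $\overline G$ with the integrated tail of $F$: a standard renewal/duality estimate bounds $\overline{G}(x)$ in terms of $\int_x^\infty\overline F$, after which an integration by parts turns condition 2, namely $\int_0^\infty\varphi(x)\overline F(x)\,{\rm d}x<\infty$, into $\int_0^\infty\varphi\,{\rm d}G<\infty$.

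For (ii) the exponent $r=\lim_{x\to\infty}\log\varphi(x)/x$ governs $\theta$. Since $\log\varphi$ is subadditive, Fekete's lemma also gives $\varphi(x)\geqslant {\rm e}^{rx}$ for all $x$, whence $a_n\geqslant\int {\rm e}^{rx}\,{\rm d}G^{*n}=(\widehat G(r))^n$ with $\widehat G(r):=\int_0^\infty {\rm e}^{rx}\,{\rm d}G(x)$, so $\theta\geqslant\widehat G(r)$; a matching upper bound, using $\varphi(x)\leqslant C_\varepsilon {\rm e}^{(r+\varepsilon)x}$ and letting $\varepsilon\downarrow0$, identifies $\theta=\widehat G(r)$. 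If $r=0$ this reads $\theta=p<1$, so the net-profit condition alone closes the argument. If $r>0$, I would invoke the Wiener--Hopf factorization $1-\mathbb{E}{\rm e}^{r\xi_1}=(1-\widehat G(r))(1-\widehat G_-(r))$, where $\widehat G_-(r)=\mathbb{E}[{\rm e}^{rS_{\tau_-}};\tau_-<\infty]<1$ because the weak descending ladder height $S_{\tau_-}$ is nonpositive while its epoch $\tau_-$ is proper. Condition 3, $\mathbb{E}{\rm e}^{r\xi_1}<1$, makes the left-hand side positive, forcing $\widehat G(r)<1$ and hence $\theta<1$.

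The main obstacle is step (ii): one cannot rely on the crude bound $a_n\leqslant(\int\varphi\,{\rm d}G)^n$, since $\int\varphi\,{\rm d}G$ may well exceed $1$ even when $\mathbb{E}\varphi(M_\infty)<\infty$; convergence is dictated by the spectral radius $\theta=\lim_n a_n^{1/n}$, not by $a_1$. Pinning down $\theta=\widehat G(r)$ (which also requires $\widehat G(r+\varepsilon)<\infty$ for small $\varepsilon>0$ and its right-continuity at $r$) and then feeding condition 3 through the factorization is where the real work lies, with the comparison of the ladder-height moment to $\int\varphi\overline F$ in step (i) a secondary technical point.
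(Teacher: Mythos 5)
You should first note that the paper never proves Theorem~1: it is quoted from Sgibnev's 1997 paper, and the authors explicitly remark that Sgibnev's proof rests on Banach-algebra techniques (weighted convolution algebras and Gelfand theory), a purely probabilistic argument being available only for the special case $\varphi(x)={\rm e}^{\varrho x}$ of Corollary~1. Your ladder-height/Fekete/Wiener--Hopf outline is therefore a genuinely different, probabilistic route, and much of it is sound: the decomposition $\mathbb{E}\varphi(M_\infty)=(1-p)\sum_{n\geqslant0}a_n$, the submultiplicativity $a_{m+n}\leqslant a_m a_n$, the finiteness of $a_1$ via $\overline{G}(x)=\int_{[0,\infty)}\overline{F}(x+y)\,{\rm d}U_-(y)$ and the integrated-tail condition, and the implication $\mathbb{E}\,{\rm e}^{r\xi_1}<1\Rightarrow\widehat{G}(r)<1$ (which simultaneously yields $\widehat{G}(r)<\infty$) are all correct.

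The genuine gap sits exactly at the step you flag as ``where the real work lies'', and your method for it fails: the upper bound $\theta\leqslant\lim_{\varepsilon\downarrow0}\widehat{G}(r+\varepsilon)$ needs $\widehat{G}(r+\varepsilon)<\infty$ for some $\varepsilon>0$, and this does not follow from the three hypotheses. Take $r=0$, $\varphi(x)=(1+x)^2$ and $\overline{F}(x)\asymp(1+x)^{-4}$ on $[0,\infty)$, with the negative part of $\xi_1$ arranged so that $\mathbb{E}\xi_1<0$: all hypotheses hold, but $\mathbb{E}\,{\rm e}^{s\xi_1^+}=\infty$ for every $s>0$, and since $\overline{G}(x)\geqslant\overline{F}(x)$ for $x>0$ (the event $\{\xi_1>x\}$ forces the first ladder height to exceed $x$), also $\widehat{G}(s)=\infty$ for every $s>0$; the same occurs for $r>0$ with $\varphi(x)={\rm e}^{rx}(1+x)^2$ and $\overline{F}(x)\asymp{\rm e}^{-rx}(1+x)^{-4}$, $\mathbb{E}\,{\rm e}^{r\xi_1}<1$. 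So your $\varepsilon$-limit is vacuous already in the most classical Kiefer--Wolfowitz (polynomial) case, and the gap also hits your ``easy'' case $r=0$, where the same finiteness is tacitly used. The step can be repaired, but it needs an extra idea rather than a refinement of the same bound: tilt first, writing $a_n=\widehat{G}(r)^n\,\mathbb{E}\psi(T_n)$, where $\psi(x)=\varphi(x){\rm e}^{-rx}$ is submultiplicative of zero exponential rate, $T_n$ is a random walk whose step law is ${\rm e}^{rx}G({\rm d}x)/\widehat{G}(r)$, and $\mathbb{E}\psi(Y_1)=a_1/\widehat{G}(r)<\infty$; then truncate the steps at a level $K$, using $\psi(u+v)\leqslant\psi(u)\psi(v)$ and $\sup_{[0,T]}\psi={\rm e}^{o(T)}$ to obtain $\limsup_n(\mathbb{E}\psi(T_n))^{1/n}\leqslant 1+\mathbb{E}(\psi(Y_1);\,Y_1>K)$ for every $K$, and let $K\to\infty$. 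This tilting-plus-truncation is the elementary substitute for what Sgibnev extracts from the Gelfand spectral-radius formula in the weighted algebra, namely $\theta=\widehat{G}(r)$; without it (or something equivalent), your argument establishes only the lower bound $\theta\geqslant\widehat{G}(r)$, which is the direction that proves nothing.
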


Recall that a function $\varphi$ defined on the interval $[0,\infty)$
is said to be submultiplicative if
\[
\varphi(0)=1\quad \text{and}\quad \varphi(x+y)\leqslant\varphi(x)\varphi(y)\quad \text{for\ all}\ x,y \in[0,\infty).
\]
Theorem~\ref{tt1} was applied several times to find the asymptotic
behavior of the ruin probabilities in the homogeneous renewal risk models.

\textit{We say that the insurer's surplus process $R(t)$ varies
according to the homogeneous renewal risk model if
%
\begin{equation}
\label{a1} R(t)=u+pt-\sum_{i=1}^{\varTheta(t)}Z_i,
\quad t\geqslant0,
\end{equation}
where:
\begin{itemize}\setlength\itemsep{0pt}
\item[$\bullet$] $u\geqslant0$ denotes the initial insurer's surplus;
\item[$\bullet$] $ p>0$ denotes a constant premium rate;
\item[$\bullet$] the claim sizes $\{Z_1,Z_2,\ldots\}$ form a sequence of i.i.d.
nonnegative r.v.'s;
\item[$\bullet$] $\varTheta(t)=\sum_{n=1}^\infty\ind_{\{\theta
_1+\theta_2+\cdots+\theta_n\,\leqslant\, t\}}$ is the renewal counting
process generated by the inter-oc\-cur\-ren\-ce times $\{\theta_1,\theta
_2,\ldots\}$, which form another sequence of i.i.d. nonnegative and
nondegenerate at $ 0$ r.v.'s;
\item[$\bullet$] the sequences $\{Z_1,Z_2, \ldots\}$ and $\{\theta
_1,\theta_2,\ldots\}$ are mutually independent.
\end{itemize}}

The ultimate ruin probability
%
\begin{equation}
\label{pp} \psi(u)=\mathbb{P} \Bigl(\inf\limits
_{t\geqslant0}R(t)<0 \Bigr)=\mathbb {P}
\Biggl(\sup\limits
_{n\geqslant1}\sum\limits
_{k=1}^n(Z_k-p
\theta _k)>u \Biggr)
\end{equation}
and the probability of ruin within time $T$
\[
\psi(u,T)=\mathbb{P} \Bigl(\inf\limits
_{0\leqslant t\leqslant
T}R(t)<0 \Bigr)=\mathbb{P} \Biggl(\sup
\limits
_{1\leqslant n\leqslant\varTheta
(T)}\sum\limits
_{k=1}^n(Z_k-p
\theta_k)>u \Biggr)
\]
are the main characteristics of the renewal risk model.\vadjust{\eject}

The asymptotic behavior of $\psi(u,T)$ was considered by Tang \cite
{tang-2004} when random claims in the homogeneous model have d.f. with
consistently varying tail. The author of the paper uses the assertion
of Theorem~\ref{tt1} for function $\varphi(x)=(1+x)^q$ with $q>0$ to
get the main term of the asymptotics for the probability $\psi(u,T)$.
Leipus and \v Siaulys considered the asymptotic behavior of $\psi
(u,T)$ in
\cite{ls-2007,ls-2009} but for subexponentially distributed r.v.'s $\{
Z_1,Z_2,\ldots\}$. In their proofs, the assertion of Theorem~\ref{tt1}
was used for function $\varphi(x)=\exp(\rho x)$ with some $\rho>0$ (see
Lemma 3.3 in \cite{ls-2007} and Lemma 2.1 in \cite{ls-2009}). In the
case of exponential function, Theorem~\ref{tt1} implies the following assertion.

\begin{cor}\label{cca1}
Let $\{\xi_1,\xi_2,\ldots\}$ be a
sequence of i.i.d. r.v.'s. If $\mathbb{E}\xi_1<0$ and $\mathbb{E}\,{\rm
e}^{h \xi_1}<\infty$ for some positive $h$, then there exists a
positive constant $\varrho$ such that
\[
{\rm e}^{\varrho x}\mathbb{P} (M_\infty>x )\mathop{\rightarrow }
\limits_{x\rightarrow\infty}0.
\]
\end{cor}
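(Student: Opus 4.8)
The plan is to invoke Theorem~\ref{tt1} with the exponential submultiplicative function $\varphi(x)={\rm e}^{r x}$ for a suitably small $r>0$, deduce the finiteness of the moment $\mathbb{E}\,{\rm e}^{r M_\infty}$, and then convert this into the claimed decay by a Markov-type estimate. Note first that $\mathbb{E}\xi_1<0$ forces $\sum_{k=1}^n\xi_k\to-\infty$ almost surely, so $M_\infty$ is finite a.s.\ and the moment in question is meaningful. The crux of the argument is to produce the exponent: writing $m(r)=\mathbb{E}\,{\rm e}^{r\xi_1}$, the hypothesis $\mathbb{E}\,{\rm e}^{h\xi_1}<\infty$ makes $m$ finite and convex on $[0,h]$, with $m(0)=1$ and right derivative $m'_+(0)=\mathbb{E}\xi_1<0$. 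Hence $m(r)<1$ for all sufficiently small $r>0$, and I would fix such an $r_0\in(0,h)$. Justifying the interchange that identifies $m'_+(0)$ with $\mathbb{E}\xi_1$ is the only delicate point, and it is precisely where the two hypotheses of the corollary combine; everything afterwards is routine.

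Next I would check the three conditions of Theorem~\ref{tt1} for $\varphi(x)={\rm e}^{r_0 x}$, which is nondecreasing and submultiplicative with $r:=\lim_{x\to\infty}\log\varphi(x)/x=r_0>0$. The first condition $\mathbb{E}\xi_1<0$ is given, and the third condition $m(r_0)<1$ holds by the choice of $r_0$. For the integral condition, a Fubini computation yields $\int_0^\infty {\rm e}^{r_0 x}\overline{F}(x)\,{\rm d}x=r_0^{-1}\mathbb{E}\bigl[({\rm e}^{r_0\xi_1}-1)\ind_{\{\xi_1>0\}}\bigr]\leqslant r_0^{-1}\mathbb{E}\,{\rm e}^{r_0\xi_1}<\infty$, the last bound using $r_0\leqslant h$. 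With all three conditions verified, Theorem~\ref{tt1} gives $\mathbb{E}\,{\rm e}^{r_0 M_\infty}<\infty$.

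Finally, Markov's inequality applied to ${\rm e}^{r_0 M_\infty}$ gives $\mathbb{P}(M_\infty>x)\leqslant {\rm e}^{-r_0 x}\,\mathbb{E}\,{\rm e}^{r_0 M_\infty}$, so for any $\varrho\in(0,r_0)$ we obtain $ {\rm e}^{\varrho x}\mathbb{P}(M_\infty>x)\leqslant {\rm e}^{-(r_0-\varrho)x}\,\mathbb{E}\,{\rm e}^{r_0 M_\infty}\to 0$ as $x\to\infty$. Taking, for instance, $\varrho=r_0/2$ completes the proof. The essential obstacle is thus the initial step of producing an exponent $r_0$ with $m(r_0)<1$; once this is in hand the verification of Theorem~\ref{tt1}'s hypotheses and the concluding estimate are both straightforward.
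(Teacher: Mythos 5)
Your proposal is correct and takes essentially the same route as the paper: Corollary~\ref{cca1} is obtained there precisely by applying Theorem~\ref{tt1} to the exponential function $\varphi(x)={\rm e}^{r_0x}$, which is exactly your argument. The paper states this implication without detail, while you supply the supporting steps (choosing $r_0$ small enough that $\mathbb{E}\,{\rm e}^{r_0\xi_1}<1$ via the behavior of the moment generating function near $0$, the Fubini verification of the integral condition, and the concluding Markov estimate with $\varrho<r_0$), all of which are sound.
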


The Sgibnev's proof of Theorem~\ref{tt1} is substantially related to
the techniques of Banach algebras, while Corollary~\ref{cca1} can be
derived using only the probabilistic approach. Wang et al. (see Lemma
4.4 in \cite{wcwm-2012}) demonstrated such a probabilistic way to
obtain the assertion of Corollary~\ref{cca1} supposing, in addition,
that r.v.'s $\{\xi_1,\xi_2,\ldots\}$ follow some dependence structure.
Corollary~\ref{cca1} can be applied not only as auxiliary assertion in
the consideration of the asymptotic behavior of $\psi(u,T)$. The
assertion of Corollary~\ref{cca1} is closely related to the following
statement on the upper bound for $\psi(u)$ in the homogeneous renewal
risk model.

\begin{thm}\label{tt2}
Let the
claim sizes $\{Z_1,Z_2,\ldots\}$ and the inter-oc\-cur\-ren\-ce times\break
$\{\theta_1,\theta_2,\ldots\}$ form a homogeneous renewal risk model.
Let, in addition, the net profit condition $\mathbb{E}Z_1-p\,\mathbb
{E}\theta_1<0$ hold and $\mathbb{E}\,{\rm e}^{hZ_1}<\infty$ for some
positive $h$. Then, there exists a positive $H$ such that
%
\begin{equation}
\label{ll1} \psi(u)\leqslant{\rm e}^{-H\,u},\quad u\geqslant0.
\end{equation}
If $\ \mathbb{E}\,{\rm e}^{R\,(Z_1-p\,\theta_1)}=1$ for some positive
$R$, then we can take $H=R$ in \eqref{ll1}.
\end{thm}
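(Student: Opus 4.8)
The plan is to recast $\psi(u)$ as a first-passage probability of an i.i.d.\ random walk and to control it by an exponential supermartingale, i.e.\ to carry out the classical Lundberg argument; note that Corollary~\ref{cca1} by itself is not enough here, since it yields only asymptotic decay of $\mathbb{P}(M_\infty>u)$, whereas \eqref{ll1} must hold for every $u\geqslant0$ with an explicit exponent. Put $\xi_k=Z_k-p\theta_k$. Because $\{Z_k\}$ and $\{\theta_k\}$ are i.i.d.\ and mutually independent, $\{\xi_1,\xi_2,\ldots\}$ is again a sequence of i.i.d.\ r.v.'s, and the net profit condition becomes $\mathbb{E}\xi_1=\mathbb{E}Z_1-p\,\mathbb{E}\theta_1<0$. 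With $S_n=\sum_{k=1}^n\xi_k$ and the first-passage time $\tau_u=\inf\{n\geqslant1:S_n>u\}$ (where $\inf\emptyset=\infty$), representation \eqref{pp} gives
\[
\psi(u)=\mathbb{P}\Bigl(\sup_{n\geqslant1}S_n>u\Bigr)=\mathbb{P}(\tau_u<\infty).
\]

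First I would locate a suitable exponent through the moment generating function $m(s)=\mathbb{E}\,{\rm e}^{s\xi_1}=\mathbb{E}\,{\rm e}^{sZ_1}\,\mathbb{E}\,{\rm e}^{-sp\theta_1}$, the factorization being valid by independence. For $s\in[0,h]$ one has $\mathbb{E}\,{\rm e}^{sZ_1}\leqslant\mathbb{E}\,{\rm e}^{hZ_1}<\infty$ and $\mathbb{E}\,{\rm e}^{-sp\theta_1}\leqslant1$ since $p\theta_1\geqslant0$, so $m$ is finite, hence convex, on $[0,h]$, with $m(0)=1$. A standard dominated-convergence argument identifies the right derivative $m'(0^+)=\mathbb{E}\xi_1<0$, so $m$ strictly decreases as $s$ leaves the origin, and I may fix some $H\in(0,h)$ with $m(H)\leqslant1$. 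This $H$ serves the first claim. For the refined claim the hypothesis supplies $R>0$ with $m(R)=1$, and such $R$ necessarily lies in $(0,h]$, inside the domain of $m$.

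The core step is the supermartingale estimate. For the chosen $H$ and $\mathcal{F}_n=\sigma(\xi_1,\ldots,\xi_n)$ one has $\mathbb{E}[\,{\rm e}^{HS_n}\mid\mathcal{F}_{n-1}]={\rm e}^{HS_{n-1}}m(H)\leqslant{\rm e}^{HS_{n-1}}$, so $W_n={\rm e}^{HS_n}$ is a nonnegative supermartingale (and a genuine martingale when $m(H)=1$). Applying optional stopping to the bounded time $\tau_u\wedge n$ gives $\mathbb{E}[W_{\tau_u\wedge n}]\leqslant W_0=1$; restricting the expectation to $\{\tau_u\leqslant n\}$, where $S_{\tau_u}>u$, yields
\[
1\geqslant\mathbb{E}\bigl[W_{\tau_u}\ind_{\{\tau_u\leqslant n\}}\bigr]\geqslant{\rm e}^{Hu}\,\mathbb{P}(\tau_u\leqslant n).
\]
Letting $n\to\infty$ and using $\mathbb{P}(\tau_u\leqslant n)\uparrow\mathbb{P}(\tau_u<\infty)$, I obtain $\psi(u)\leqslant{\rm e}^{-Hu}$, which is \eqref{ll1}. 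Repeating the computation verbatim with $H=R$, where $W_n$ is now a true martingale, produces $\psi(u)\leqslant{\rm e}^{-Ru}$.

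I expect the main delicate point to be the passage $n\to\infty$: since $\tau_u=\infty$ with positive probability in general, optional stopping cannot be applied directly at $\tau_u$, which is exactly why the truncation $\tau_u\wedge n$ is used, the monotone limit in $n$ then being routine. A second, milder point is the extraction of the sign of $m'(0^+)$ from the net profit condition; convexity of $m$ on $[0,h]$ guarantees that the one-sided derivative exists and the dominated-convergence argument identifies it with $\mathbb{E}\xi_1$, so the single integrability hypothesis $\mathbb{E}\,{\rm e}^{hZ_1}<\infty$ suffices and no control of the left tail of $\theta_1$ is needed.
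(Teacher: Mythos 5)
Your proof is correct, but it takes a genuinely different route from the paper --- which in fact contains no self-contained proof of Theorem~\ref{tt2} at all. The authors present it as the classical Lundberg inequality, point to the literature for proofs (\cite{AA-2010,EKM-1997,ev-1982,g-1973,M-2009}), and sketch only one alternative derivation: the first part of \eqref{ll1} follows from Sgibnev's Theorem~\ref{tt1} (applied to $\varphi(x)={\rm e}^{rx}$, which yields $\mathbb{E}\,{\rm e}^{rM_\infty}<\infty$ and hence $\psi(u)\leqslant C\,{\rm e}^{-ru}$ by Markov's inequality) combined with the additional inequality $\psi(0)<1$, which is what allows one to trade the constant $C$ for a smaller exponent $H<r$ and reach the constant $1$. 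What you carry out instead is essentially Gerber's martingale argument (the route of \cite{g-1973}): reduce $\psi(u)$ via \eqref{pp} to a first-passage probability of the i.i.d.\ walk $S_n$, locate $H$ with $m(H)\leqslant1$ through convexity of the moment generating function, and apply optional stopping to the nonnegative supermartingale ${\rm e}^{HS_n}$ at the bounded time $\tau_u\wedge n$. Your route buys more than the paper's sketch: it gives \eqref{ll1} directly with prefactor $1$, and, run with $H=R$, it also gives the refined second claim, which the Sgibnev route does not yield (that route produces some unspecified, typically much smaller, exponent). It also contrasts instructively with the paper's own Theorem~\ref{th1}, whose Chernoff-type union bound inevitably produces a prefactor $c_1>1$; as Example~\ref{ex14} shows, that loss is unavoidable under merely ``on average'' conditions, and it is precisely the i.i.d.\ structure that lets your supermartingale argument avoid it.

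Two small blemishes, neither of which breaks the proof. First, your claim that the root $R$ of $m(R)=1$ ``necessarily lies in $(0,h]$'' is false in general: for bounded claims $m$ is finite on all of $[0,\infty)$ and $R$ can well exceed $h$. Fortunately you never use $R\leqslant h$; the martingale property needs only $m(R)=1<\infty$, which is the hypothesis itself, so simply drop that remark. Second, identifying $m'(0^+)=\mathbb{E}\xi_1$ by dominated convergence tacitly assumes $\mathbb{E}\theta_1<\infty$, since the dominating function must control the left tail of $\xi_1$. If $\mathbb{E}\theta_1=\infty$, the net profit condition still holds trivially, and the fix is monotone rather than dominated convergence: $({\rm e}^{s\xi_1}-1)/s$ decreases to $\xi_1$ as $s\downarrow0$, so $m'(0^+)=\mathbb{E}\xi_1\in[-\infty,0)$ and $m$ still dips below $1$ near the origin, which is all your argument requires.
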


The above assertion is the well-known Lundberg inequality. There exist
a lot of different proofs of this inequality. For example, some of the
proofs can be found in \cite{AA-2010}, \cite{EKM-1997}, \cite{ev-1982},
\cite{g-1973}, \cite{M-2009}. The existing proofs of Lundberg's
inequality are essentially based on the renewal idea. However, the
classical methods used for consideration of the ruin probability in the
homogeneous renewal risk model are not applicable in the case of the
inhomogeneous model because at any time moment distribution of the
future is completely new.

Another way to derive the Lundberg inequality is related to Theorem~\ref{tt1}. Namely, the first part of Theorem~\ref{tt2} follows from
Theorem~\ref{tt1} and the additional inequality $\psi(0)<1$. We use
this approach to get the inequality similar to the Lundberg inequality
but for the inhomogeneous renewal risk model with not necessarily
identically distributed claim sizes $\{Z_1,Z_2,\ldots\}$ and the
inter-occurrence times $\{\theta_1,\theta_2,\ldots\}$.

In this paper, we consider a sequence of independent but not
necessarily identically distributed r.v.'s $\{\xi_1,\xi_2,\ldots\}$.
We obtain an assertion similar to that in Corollary~\ref{cca1}. We
present an algorithm to get the numerical values of the two positive
constants in the exponential bound for $\mathbb{P}(M_\infty>x)$ in the
case of not necessarily identically distributed r.v.'s $\{\xi_1,\xi
_2,\ldots\}$. We apply the obtained estimate to derive two
Lundberg-type inequalities similar to that in Theorem~\ref{tt2} but for
the inhomogeneous renewal risk model with possibly nonidentically
distributed random claim amounts $Z_1,Z_2,\ldots$. Corollaries~\ref
{ccc1} and~\ref{ccc2} below show that the exponential bound for the
ruin probability in the inhomogeneous renewal risk model holds if the
model satisfies the \textit{net profit condition on average}. This
means that the quantity
\[
\frac{1}{n}\sum_{k=1}^n\mathbb{E}
(Z_k-p\theta_k )
\]
is negative for all sufficiently large $n$.

The results of the present paper are complementary to those obtained
by Albrecher et al. \cite{aiz-2016}, Ambagaspitiya \cite{am-2009},
Bernackait\.{e} and \v{S}iaulys \cite{bs-2015,bs-2017}, Casta\~{n}er
et al. \cite{ccglm-2013}, Cojocaru \cite{co-2017}, Constantinescu et
al. \cite{ckm-2013}, Czarna and Palmowski \cite{cp-2011}, Damarackas
and \v{S}iaulys \cite{ds-2014}, Danilenko et al. \cite{dms-2017},
Grigutis et al. \cite{gks-2015,gks-2015+}, Jordanova and Stehl\'{i}k
\cite{js-2016}, Mishura et al. \cite{mrs-2014}, R\u{a}ducan et al. \cite
{rvz-2015,rvz-2015b,rvz-2016}, Ragulina \cite{r-2017}, Zhang et al.
\cite{zfly-2017}, Zhang et al. \cite{zcy-2017} and other authors who
dealt with different inhomogeneous risk models.

The rest of the paper is organized as follows. In Section~\ref{main},
we present our main result together with its proof. In Section~\ref{ren}, we recall the concept of the inhomogeneous renewal risk model
and we present two corollaries from the main theorem, which yield
exponential bounds for the ruin probability in this model. Finally, in
Section~\ref{ex}, we present some examples which show the applicability
of the theorem and corollaries.

\section{Main result}\label{main}

In this section, we formulate and prove our main result. The assertion
below is a generalization of Lemma 1 by Andrulyt\.{e} et al. \cite
{abks-2015}. In that lemma, the exponential bound for $ \mathbb
{P}(M_\infty>x)$ was established under more restrictive conditions. In
addition, the assertion below provides an algorithm to calculate two
positive constants establishing this exponential bound. For this
reason, the conditions of the main theorem are formulated in an
explicit form\querymark{Q1} in contrast to  the conditions of Lemma
1 in \cite{abks-2015}. It should be noted that the presented proof of
the main result has some similarities with the classical approach by
Chernoff \cite{ch-1952} and Hoeffding \cite{ho-1963}.

\begin{thm}\label{th1} Let $\{\xi_1,\xi_2,\ldots\}$ be
independent r.v.'s such that:
\begin{eqnarray*}
&(\textnormal{i})&\frac{1}{n}\sum\limits
_{i=1}^{n}
\mathbb{E}\, \xi _{i}\leqslant-a \quad \text{if}\ n\geqslant b,
\\
&(\textnormal{ii})&\sup_{n\geqslant b}\frac{1}{n} \sum
\limits
_{i=1}^{n} \mathbb{E}\big(|\xi_{i}|
\ind_{\lbrace\xi_{i}\leqslant-c\rbrace} \big)\leqslant\varepsilon,
\\
&(\textnormal{iii})&\sup_{n\geqslant b}\frac{1}{n}\sum
\limits
_{i=1}^{n} \bigl(\mathbb{P}(\xi_{i}\leqslant0)+
\mathbb{E} \bigl({\rm e}^{h\xi_{i}}\ind_{\{\xi_i>0\}} \bigr) \bigr)\leqslant
d_1,
\\
&(\textnormal{iv})&\max_{1\leqslant n\leqslant b-1}\,\frac{1}{n}\sum
\limits
_{i=1}^{n} \bigl(\mathbb{P}(\xi_{i} \leqslant0)+
\mathbb{E} \bigl({\rm e}^{h\xi_{i}}\ind_{\{\xi_i>0\}} \bigr) \bigr)\leqslant
d_2,\
\end{eqnarray*}
for some $a>0$, $b\in\mathbb{N}$, $c>0$, $\varepsilon\geqslant0$,
$h>0$, $d_1\geqslant1$ and $d_2\geqslant1$.

If
%
\begin{equation}
\label{0} -\varDelta:=\varepsilon+\delta h d_1\max \biggl\{
\frac{c^2}{2},\frac
{2}{h^2} \biggr\}-a<\xch{0,}{0}
\end{equation}
with some $\delta\in(0, 1/2]$, then
\[
\mathbb{P} \Biggl(\displaystyle\sup_{n\geqslant0}\sum
_{i=1}^n \xi _{i}>x \Biggr)\leqslant\min
\bigl\{1,\, c_{1}{\rm e}^{-\delta{h} x} \bigr\} ,\quad x\geqslant0,
\]
where
\[
c_1= \xch{\biggl(S(b,d_2)+\frac{\exp\{-\delta{h}\varDelta b\}}{1-\exp\{-\delta
{h}\varDelta\}} \biggr),}{\biggl(S(b,d_2)+\frac{\exp\{-\delta{h}\varDelta b\}}{1-\exp\{-\delta
{h}\varDelta\}} \biggr)}
\]
with
\[
S(b,d_2)= %
\begin{cases}d_2\frac{d_2^{b-1}-1}{d_2-1}\quad \text{if}\ d_2>1,\\
b-1\quad \text{if}\ d_2=1.
\end{cases} %
\]
\end{thm}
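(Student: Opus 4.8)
The plan is to combine a Chernoff-type exponential estimate with the elementary maximal bound $\mathbb{P}(\sup_{n\geqslant0}\sum_{i=1}^n\xi_i>x)\leqslant\sum_{n=1}^\infty\mathbb{P}(\sum_{i=1}^n\xi_i>x)$, valid for $x\geqslant0$ because the empty sum vanishes. Writing $s=\delta h$ and applying Markov's inequality to $\exp\{s\sum_{i=1}^n\xi_i\}$ together with independence, each summand is controlled by $\mathbb{P}(\sum_{i=1}^n\xi_i>x)\leqslant \mathrm{e}^{-sx}\prod_{i=1}^n\mathbb{E}\,\mathrm{e}^{s\xi_i}$. The shape of $c_1$ then dictates the whole strategy: since $S(b,d_2)=\sum_{n=1}^{b-1}d_2^{\,n}$ and $\exp\{-s\varDelta b\}/(1-\exp\{-s\varDelta\})=\sum_{n=b}^\infty\mathrm{e}^{-s\varDelta n}$, it suffices to prove the two product estimates $\prod_{i=1}^n\mathbb{E}\,\mathrm{e}^{s\xi_i}\leqslant d_2^{\,n}$ for $1\leqslant n\leqslant b-1$ and $\prod_{i=1}^n\mathbb{E}\,\mathrm{e}^{s\xi_i}\leqslant \mathrm{e}^{-s\varDelta n}$ for $n\geqslant b$, after which summing the two geometric series and invoking the trivial bound $1$ finishes the proof.

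For the short-horizon estimate I would first record the crude bound $\mathbb{E}\,\mathrm{e}^{s\xi_i}\leqslant\mathbb{P}(\xi_i\leqslant0)+\mathbb{E}(\mathrm{e}^{h\xi_i}\ind_{\{\xi_i>0\}})=:g_i$, which holds because $\mathrm{e}^{s\xi_i}\leqslant1$ on $\{\xi_i\leqslant0\}$ and $\mathrm{e}^{s\xi_i}\leqslant\mathrm{e}^{h\xi_i}$ on $\{\xi_i>0\}$ as $s\leqslant h$. The arithmetic--geometric mean inequality then gives $\prod_{i=1}^n g_i\leqslant(\frac1n\sum_{i=1}^n g_i)^n$, which is at most $d_2^{\,n}$ by condition (iv); this settles the range $1\leqslant n\leqslant b-1$.

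The real work is the long-horizon estimate. Here I would expand $\mathbb{E}\,\mathrm{e}^{s\xi_i}=1+s\,\mathbb{E}\xi_i+\mathbb{E}(\mathrm{e}^{s\xi_i}-1-s\xi_i)$ and bound the remainder by splitting the sample space into $\{\xi_i\leqslant-c\}$, $\{-c<\xi_i\leqslant0\}$ and $\{\xi_i>0\}$. On the first region the convexity bound $\mathrm{e}^{s\xi_i}-1-s\xi_i\leqslant s|\xi_i|$ produces the term $s\,\mathbb{E}(|\xi_i|\ind_{\{\xi_i\leqslant-c\}})$ matched to condition (ii); on the second, the inequality $\mathrm{e}^t\leqslant1+t+t^2/2$ for $t\leqslant0$ gives $\mathrm{e}^{s\xi_i}-1-s\xi_i\leqslant s^2c^2/2\leqslant s\delta h\max\{c^2/2,2/h^2\}$; on the third I would use $\mathrm{e}^t-1-t\leqslant\frac{t^2}{2}\mathrm{e}^t$ to obtain $\mathrm{e}^{s\xi_i}-1-s\xi_i\leqslant s^2\max\{c^2/2,2/h^2\}\,\mathrm{e}^{h\xi_i}$. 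Collecting the last two regions reproduces $s\delta h\max\{c^2/2,2/h^2\}\,g_i$, so altogether $\mathbb{E}\,\mathrm{e}^{s\xi_i}\leqslant1+v_i$ with $v_i=s\,\mathbb{E}\xi_i+s\,\mathbb{E}(|\xi_i|\ind_{\{\xi_i\leqslant-c\}})+s\delta h\max\{c^2/2,2/h^2\}\,g_i$. Then $\prod_{i=1}^n\mathbb{E}\,\mathrm{e}^{s\xi_i}\leqslant\exp\{\sum_{i=1}^n v_i\}$ via $1+v\leqslant\mathrm{e}^v$, and conditions (i)--(iii) bound $\frac1n\sum_{i=1}^n v_i$ by $s(-a+\varepsilon+\delta h d_1\max\{c^2/2,2/h^2\})=-s\varDelta$ for $n\geqslant b$, which is precisely $\prod_{i=1}^n\mathbb{E}\,\mathrm{e}^{s\xi_i}\leqslant\mathrm{e}^{-s\varDelta n}$.

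I expect the delicate point to be the positive-part estimate, where the constant $2/h^2$ must emerge. After reducing to $\mathrm{e}^{s\xi_i}-1-s\xi_i\leqslant\frac{(s\xi_i)^2}{2}\mathrm{e}^{s\xi_i}$, the factor I actually need to control is $\sup_{y>0}\frac{y^2}{2}\mathrm{e}^{-(h-s)y}$, and the hypothesis $\delta\leqslant1/2$ (forcing $h-s\geqslant h/2$) together with the $\mathrm{e}^{-2}$ loss at the maximizer $y=2/(h-s)$ is exactly what makes this supremum at most $2/h^2$, so that it merges with the quadratic constant $c^2/2$ of the middle region into the single factor $\max\{c^2/2,2/h^2\}$. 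Once the two product bounds are in hand, the remaining arithmetic --- summing $\sum_{n=1}^{b-1}d_2^{\,n}=S(b,d_2)$ and the convergent geometric tail $\sum_{n=b}^\infty\mathrm{e}^{-s\varDelta n}$ (finite since $\varDelta>0$) --- is routine and produces $c_1\mathrm{e}^{-\delta h x}$, whence the stated $\min\{1,c_1\mathrm{e}^{-\delta h x}\}$.
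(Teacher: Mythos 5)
Your proposal is correct and follows essentially the same route as the paper's proof: union bound plus the exponential Chebyshev inequality at $y=\delta h$, the split of the series at $n=b$ with the arithmetic--geometric mean inequality and condition (iv) handling $1\leqslant n\leqslant b-1$, and the three-region expansion of $\mathbb{E}\,{\rm e}^{y\xi_i}$ combined with $1+v\leqslant{\rm e}^v$ and conditions (i)--(iii) handling $n\geqslant b$. The only cosmetic difference is on the region $\{\xi_i>0\}$, where you control $\sup_{v>0}\frac{v^2}{2}{\rm e}^{-(h-s)v}=\frac{2}{(h-s)^2{\rm e}^2}\leqslant\frac{2}{h^2}$ directly, while the paper instead applies $v^2\leqslant{\rm e}^v$ with $v=h\xi_i/2$; both arguments rest on the same hypothesis $\delta\leqslant 1/2$ and produce the same constant $2/h^2$.
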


\begin{proof} We observe that, for all $x\geqslant0$,
\begin{align*}
\mathcal{P}(x)&:=\mathbb{P} \Biggl(\displaystyle\sup_{n\geqslant0}
\sum_{i=1}^n \xi_{i}>x \Biggr)=
\mathbb{P} \Biggl(\bigcup_{n=1}^\infty \Biggl
\lbrace\sum_{i=1}^n \xi_{i}>x
\Biggr\rbrace \Biggr)
\\
&\leqslant \sum_{n=1}^\infty\mathbb{P}
\Biggl(\sum_{i=1}^n \xi_{i}>x
\Biggr).
\end{align*}

Since r.v.'s $\{\xi_1,\xi_2,\ldots\}$ are independent, by the exponential Chebyshev inequality, we get
%
\begin{align}
\label{1}
\mathcal{P}(x)&\leqslant{\rm e}^{-yx}\sum\limits_{n=1}^\infty\prod_{i=1}^{n}\mathbb{E}\,{\rm e}^{y\xi_{i}}\nonumber\\
&={\rm e}^{-yx}\sum\limits_{n=1}^{b-1}\prod_{i=1}^{n}\mathbb{E}\,{\rm e}^{y\xi_{i}}+{\rm e}^{-yx}\sum\limits_{n=b}^\infty\prod_{i=1}^{n}\mathbb{E}\,{\rm e}^{y\xi_{i}}\nonumber\\
&:=\mathcal{P}_1(x)+\mathcal{P}_2\xch{(x),}{(x)}
\end{align}
for all $x\geqslant0$ and $0<y\leqslant h$.

For all $i\in\mathbb{N}$, we have
\begin{align*}
\mathbb{E} {\rm e}^{y\xi_{i}}&= 1+y\mathbb{E}\xi_{i}+\mathbb{E}
\bigl( \bigl({\rm e}^{y\xi_{i}}-1 \bigr)\ind_{\lbrace\xi_{i}\leqslant-c\rbrace} \bigr)-y\mathbb{E}
\bigl( \xi_{i}\ind_{\lbrace\xi_{i}\leqslant-c\rbrace}\bigr)
\\
&\quad +\mathbb{E} \bigl( \bigl({\rm e}^{y\xi_{i}}-1-y\xi_{i} \bigr)
\ind_{\lbrace-c<\xi
_{i}\leqslant0\rbrace} \bigr)
\\
&\quad +\mathbb{E} \bigl( \bigl({\rm e}^{y\xi_{i}}-1-y\xi_{i} \bigr)
\ind_{\lbrace\xi
_{i}>0\rbrace} \bigr).
\end{align*}

It is obvious that
\begin{align*}
{\rm e}^{v}-1&\leqslant\,0\quad \text{if}\ v \leqslant0,\\
{\rm e}^{v}-v-1&\leqslant\frac{v^2}{2}\quad \text{if}\ v \leqslant 0, \\
{\rm e}^{v}-v-1&\leqslant\frac{v^2}{2}\,{\rm e}^{v}\quad \text{if}\ v\geqslant0,
\end{align*}
and $v^2\leqslant{\rm e}^v$ for nonnegative $v$. Using these inequalities we get
%
\begin{align}
\label{2} \mathbb{E} {\rm e}^{y\xi_{i}} &\leqslant 1+y\mathbb{E}
\xi_i+y\,\mathbb {E} \bigl(|\xi_{i}|\ind_{\lbrace\xi_{i}\leqslant-c\rbrace}\bigr) +
\frac{y^{2}}{2}\,\mathbb{E} \bigl(\xi_{i}^2
\ind_{\lbrace-c<\xi
_{i}\leqslant0\rbrace} \bigr)
\nonumber
\\
&\quad +\ \frac{y^{2}}{2}\,\mathbb{E} \bigl(\xi_{i}^{2}{
\rm e}^{y\xi_{i}}\ind_{\lbrace\xi_{i}>0\rbrace} \bigr)
\nonumber
\\
&\leqslant 1+y\mathbb{E}\xi_i+y\,\mathbb{E}\bigl(|\xi_{i}|
\ind_{\lbrace
\xi_{i}\leqslant-c\rbrace} \bigr) + \frac{y^2c^2}{2}\,\mathbb{P}(\xi_i
\leqslant0)
\nonumber
\\
&\quad + \frac{2y^2}{h^2}\,\mathbb{E} \xch{\bigl({\rm e}^{h\xi_{i}}\ind_{\lbrace\xi_{i}>0\rbrace} \bigr),}{\bigl({\rm e}^{h\xi_{i}}\ind_{\lbrace\xi_{i}>0\rbrace} \bigr)}
\end{align}
if $0<y\leqslant h/2$, because
\[
\mathbb{E} \biggl( \biggl(\frac{h \xi_i}{2} \biggr)^2{\rm
e}^{y\xi_{i}}\ind _{\lbrace\xi_{i}>0\rbrace} \biggr)\leqslant\mathbb{E} \biggl( \biggl(
\frac{h \xi
_i}{2} \biggr)^2{\rm e}^{h\xi_{i}/2}\ind_{\lbrace\xi_{i}>0\rbrace}
\biggr)\leqslant\mathbb{E} \xch{\bigl({\rm e}^{h\xi_{i}}\ind_{\lbrace\xi_{i}>0\rbrace
} \bigr),}{\bigl({\rm e}^{h\xi_{i}}\ind_{\lbrace\xi_{i}>0\rbrace
} \bigr)}
\]
in this case.

If $n\geqslant b$, then conditions (ii), (iii) and relation \eqref{2}
together with the inequality $1+u\leqslant{\rm e}^u$, $u\in\mathbb
{R}$, imply that
\begin{align*}
\prod\limits
_{i=1}^{n}\mathbb{E} {\rm e}^{y\xi_{i}}
&\leqslant\prod\limits
_{i=1}^{n} \biggl(1+y\mathbb{E}
\xi_i+y\,\mathbb {E} \bigl(|\xi_{i}|\ind_{\lbrace\xi_{i}\leqslant-c\rbrace}\bigr)
\\
&\quad + \frac{y^2c^2}{2}\,\mathbb{P}(\xi_i\leqslant0)+
\frac{2y^2}{h^2}\, \mathbb{E} \bigl({\rm e}^{h\xi_{i}}\ind_{\lbrace\xi_{i}>0\rbrace}
\bigr) \biggr)
\\
&\leqslant\exp \Biggl\{y\sum\limits
_{i=1}^{n}
\mathbb{E}\xi_i+ny\,\sup\limits
_{n\geqslant b}\frac{1}{n}\sum
\limits
_{i=1}^{n}\mathbb{E} \bigl(|\xi_{i}|
\ind_{\lbrace\xi_{i}\leqslant-c\rbrace} \bigr)
\\
&\quad + ny^2\max \biggl\{\frac{c^2}{2},\frac{2}{h^2} \biggr\}
\sup\limits
_{n\geqslant b}\frac{1}{n}\sum\limits
_{i=1}^{n}
\bigl(\mathbb{P}(\xi _i\leqslant0)+\mathbb{E} \bigl({\rm
e}^{h\xi_{i}}\ind_{\lbrace\xi
_{i}>0\rbrace} \bigr) \bigr) \Biggr\}
\\
&\leqslant \exp \Biggl\{y\sum\limits
_{i=1}^{n}
\mathbb{E}\xi _i+ny\varepsilon+ny^2d_1\max
\biggl\{\frac{c^2}{2},\frac{2}{h^2} \biggr\} \Biggr\}.
\end{align*}

Hence, by condition (i)
%
\begin{equation}
\label{3} \mathcal{P}_2(x)\leqslant{\rm e}^{-yx}\sum
\limits
_{n=b}^{\infty}\exp \xch{\biggl\{ny \biggl(-a+
\varepsilon+yd_1\max \biggl\{\frac{c^2}{2},\frac
{2}{h^2}
\biggr\} \biggr) \biggr\},}{\biggl\{ny \biggl(-a+
\varepsilon+yd_1\max \biggl\{\frac{c^2}{2},\frac
{2}{h^2}
\biggr\} \biggr) \biggr\}}
\end{equation}
for all $x\geqslant0$ and $0<y\leqslant h/2$.

If $n\leqslant b-1$ and $0<y\leqslant h$ then, due to the condition
(iv), we have
\[
\frac{1}{n}\sum\limits
_{i=1}^{n}\mathbb{E} {\rm
e}^{y\xi_{i}}=\frac
{1}{n}\sum\limits
_{i=1}^{n}
\bigl(\mathbb{E} \bigl({\rm e}^{y\xi_{i}}\ind _{\{\xi_i\leqslant0\}} \bigr)+
\mathbb{E} \bigl({\rm e}^{y\xi_{i}}\ind_{\{
\xi_i> 0\}} \bigr) \bigr)\leqslant
d_2.
\]
Therefore,
%
\begin{equation}
\label{4} \mathcal{P}_1(x)\leqslant{\rm e}^{-yx}\sum
\limits
_{n=1}^{b-1} \xch{d_2^{\,n},}{d_2^{\,n}}
\end{equation}
because
\[
\prod\limits
_{i=1}^{n}\mathbb{E} {\rm e}^{y\xi_{i}}
\leqslant \xch{\Biggl(\frac
{1}{n}\sum\limits
_{i=1}^{n}
\mathbb{E} {\rm e}^{y\xi_{i}} \Biggr)^n,}{\Biggl(\frac
{1}{n}\sum\limits
_{i=1}^{n}
\mathbb{E} {\rm e}^{y\xi_{i}} \Biggr)^n}
\]
by the inequality of arithmetic and geometric means.

Equality \eqref{1} and inequalities \eqref{3}, \eqref{4} imply that
%
\begin{equation}
\label{5} \mathcal{P}(x)\leqslant{\rm e}^{-yx}
\xch{\Biggl(S(b,d_2)+\sum\limits
_{n=b}^{\infty}
\biggl(\exp \biggl\{y \biggl(-a+\varepsilon+yd_1\max \biggl\{
\frac{c^2}{2},\frac{2}{h^2} \biggr\} \biggr) \biggr\}
\biggr)^n \Biggr),}{\Biggl(S(b,d_2)+\sum\limits
_{n=b}^{\infty}
\biggl(\exp \biggl\{y \biggl(-a+\varepsilon+yd_1\max \biggl\{
\frac{c^2}{2},\frac{2}{h^2} \biggr\} \biggr) \biggr\}
\biggr)^n \Biggr)}
\end{equation}
for all $x\geqslant0$ and $0<y\leqslant h/2$.

Let now $y=\delta{h}$ with some $\delta\in(0,1/2]$ satisfying
condition \eqref{0}. For such $y$, we obtain from \eqref{5} that
\[
\mathcal{P}(x)\leqslant{\rm e}^{-\delta{h} x} \biggl(S(b,d_2)+
\frac{\exp\{
-\delta{h}\varDelta b\}}{1-\exp\{-\delta{h}\varDelta\}} \biggr).
\]

This is the desired inequality. The theorem is proved.
\end{proof}

\section{Lundberg-type inequalities}\label{ren}

In this section, we present two corollaries from Theorem~\ref{th1},
which yield the Lundberg-type inequalities for the inhomogeneous
renewal risk model.

\smallskip

\textit{We say that the insurer's surplus process $R(t)$ varies
according to the inhomogeneous renewal risk model if equality \eqref
{a1} holds for all $t\geqslant0$ with the initial insurer's surplus
$u\geqslant0$, a constant premium rate $p>0$, a sequence of
independent nonnegative and not necessarily identically distributed
claim amounts $\{Z_1,Z_2,\ldots\}$ and with the renewal counting
process $\varTheta(t)$ generated by the inter-oc\-cur\-ren\-ce times $\{
\theta_1,\theta_2,\ldots\}$, which form a sequence of independent
nonnegative nondegenerate at zero and possibly not identically
distributed r.v.'s. In addition, sequences $\{Z_1,Z_2,\ldots\}$ and $\{
\theta_1,\theta_2,\ldots\}$ are supposed to be independent.}

\smallskip

It is obvious that definitions and expressions of the ruin
probabilities $\psi(u)$ and $\psi(u,T)$ for the inhomogeneous renewal
risk model remain the same as those given in Section~\ref{ii}.

The main requirement to get the Lundberg-type bounds for $\psi(u)$ is
the \textit{net profit condition}. In both assertions below, it is
supposed that this condition holds on average. Our first corollary
follows immediately from Theorem~\ref{th1} and representation \eqref{pp}.

\begin{cor}\label{ccc1}
Let us consider the inhomogeneous renewal risk model such that
r.v.'s $\xi_k=Z_k-p\theta_k$, $k\in\mathbb{N}$, satisfy conditions
{\rm(i)}--{\rm(iv)} of Theorem~\ref{th1}. Then the ruin probability
in the model satisfies the following inequality
\[
\psi(u)\leqslant\min \bigl\{1,\,c_{1}{\rm e}^{-\delta{h} u} \bigr\},\
u\geqslant0,
\]
where constants $h>0$, $\delta\in(0,1/2]$ and $c_1>0$ are the same as
in Theorem~\ref{th1} for the sequence $\{\xi_1=Z_1-p\theta_1, \xi
_2=Z_2-p\theta_2,\ldots\}$.
\end{cor}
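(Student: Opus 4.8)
The plan is to obtain the corollary as an immediate specialization of Theorem~\ref{th1}, using the representation \eqref{pp} of the ultimate ruin probability. First I would check that the sequence $\{\xi_1,\xi_2,\ldots\}$ with $\xi_k=Z_k-p\theta_k$ genuinely fits the framework of Theorem~\ref{th1}. Since the claim amounts $\{Z_1,Z_2,\ldots\}$ are independent, the inter-occurrence times $\{\theta_1,\theta_2,\ldots\}$ are independent, and the two families are mutually independent, the whole collection $\{Z_1,\theta_1,Z_2,\theta_2,\ldots\}$ is mutually independent; as $\xi_k$ is a function of the pair $(Z_k,\theta_k)$ and distinct pairs involve disjoint members of this collection, the r.v.'s $\{\xi_1,\xi_2,\ldots\}$ are independent. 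By hypothesis they satisfy conditions {\rm(i)}--{\rm(iv)}, so Theorem~\ref{th1} applies to them.

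Next I would invoke representation \eqref{pp}, which gives
\[
\psi(u)=\mathbb{P}\Biggl(\sup_{n\geqslant1}\sum_{k=1}^n(Z_k-p\theta_k)>u\Biggr)=\mathbb{P}\Biggl(\sup_{n\geqslant1}\sum_{k=1}^n\xi_k>u\Biggr).
\]
The only discrepancy with the quantity bounded in Theorem~\ref{th1} is the range of the supremum: the theorem takes $n\geqslant0$, thereby including the empty sum $\sum_{k=1}^0\xi_k=0$. Since the corollary concerns $u\geqslant0$, the event $\{0>u\}$ is empty, so adjoining the index $n=0$ leaves the tail probability unchanged, and therefore
\[
\psi(u)=\mathbb{P}\Biggl(\sup_{n\geqslant0}\sum_{k=1}^n\xi_k>u\Biggr),\quad u\geqslant0.
\]

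Applying Theorem~\ref{th1} to $\{\xi_1,\xi_2,\ldots\}$ then yields $\psi(u)\leqslant\min\{1,\,c_1{\rm e}^{-\delta h u}\}$ for $u\geqslant0$, with the same constants $h>0$, $\delta\in(0,1/2]$ and $c_1>0$ produced by the theorem, which is exactly the asserted bound. There is no substantive obstacle here: the corollary is a direct reading of the theorem through \eqref{pp}, and the single point deserving a word of care is the harmless reconciliation of the index ranges $n\geqslant1$ and $n\geqslant0$ for $u\geqslant0$.
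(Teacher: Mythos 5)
Your proposal is correct and follows exactly the paper's route: the paper derives Corollary~\ref{ccc1} "immediately from Theorem~\ref{th1} and representation \eqref{pp}", which is precisely your argument, with your two additional observations (independence of the $\xi_k=Z_k-p\theta_k$ and the harmless reconciliation of the supremum ranges $n\geqslant1$ versus $n\geqslant0$ for $u\geqslant0$) being the routine details the paper leaves implicit.
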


Our second corollary is more convenient to use because the
requirements are formulated separately for r.v.'s $\{Z_1,Z_2,\ldots\}$
and $\{\theta_1,\theta_2,\ldots\}$ in it. We present the corollary
below together with a short proof.

\begin{cor}\label{ccc2}
Let the inhomogeneous renewal risk model with a sequence of
random claim amounts $\{Z_1,Z_2,\ldots\}$, a sequence of random
inter-occurrence times\break $\{\theta_1,\theta_2,\ldots\}$ and premium rate
$p$ satisfy the following additional requirements
\begin{eqnarray*}
&{\rm(i)}& \frac{1}{n}\sum\limits_{i=1}^{n}( \hspace{0.5mm}\mathbb{E} Z_{i}-p\,\mathbb {E}\theta_i)\leqslant-\alpha\quad \text{if}\ n\geqslant\beta,\\
&{\rm(ii)}&\sup_{n\geqslant\beta}\frac{1}{n} \sum\limits_{i=1}^{n}\mathbb{E} \bigl(\theta_i\ind_{ \{\theta_i\geqslant\frac{ \varkappa}{p} \} } \bigr)\leqslant\epsilon,\\
&{\rm(iii)}& \sup_{n\geqslant\beta}\frac{1}{n}\sum\limits_{i=1}^{n}\mathbb{E} {\rm e}^{\gamma Z_i}\leqslant\nu_1,\\
&{\rm(iv)}& \max_{1\leqslant n\leqslant\beta-1}\,\frac{1}{n}\sum\limits_{i=1}^{n} \mathbb{E} {\rm e}^{\gamma Z_i}\leqslant\nu_2,
\end{eqnarray*}
for some $\alpha>0$, $\beta\in\mathbb{N}$, $\varkappa>0$, $\epsilon
\geqslant0$, $\gamma>0$, $\nu_1\geqslant1$ and $\nu_2\geqslant1$.

If
\[
-\hat{\varDelta}:=p\epsilon+\delta\gamma(1+\nu_1)\max \biggl\{
\frac{\varkappa
^2}{2},\frac{2}{\gamma^2} \biggr\}-\alpha<\xch{0,}{0}
\]
for some $\delta\in(0, 1/2]$, then
\[
\psi(u)\leqslant\min \bigl\{1,\, c_2{\rm e}^{-\delta\gamma\, u} \bigr\},\quad u\geqslant0,
\]
with the positive constant
\[
c_2= \biggl(\,\frac{1+\nu_2}{\nu_2} \bigl((1+\nu_2)^{b-1}-1
\bigr)+\frac{\exp
\{-\delta\gamma\hat{\varDelta} \beta\}}{1-\exp\{-\delta\gamma\hat{\varDelta}\}
} \biggr).
\]
\end{cor}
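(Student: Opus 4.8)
The plan is to deduce this corollary from Theorem~\ref{th1} (equivalently, from Corollary~\ref{ccc1}) applied to the sequence $\xi_k=Z_k-p\theta_k$, $k\in\mathbb N$. Since by \eqref{pp} we have $\psi(u)=\mathbb P(\sup_{n\geqslant1}\sum_{k=1}^n\xi_k>u)$, and for $u\geqslant0$ this coincides with $\mathbb P(\sup_{n\geqslant0}\sum_{k=1}^n\xi_k>u)$ (the empty sum contributes $0\leqslant u$), it suffices to check that $\{\xi_k\}$ satisfies conditions (i)--(iv) of Theorem~\ref{th1} under the identifications $h=\gamma$, $a=\alpha$, $b=\beta$, $c=\varkappa$, and to read off admissible values of $\varepsilon$, $d_1$, $d_2$ from the requirements (i)--(iv) of the present corollary. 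Condition (i) of Theorem~\ref{th1} is immediate, because $\mathbb E\xi_i=\mathbb E Z_i-p\,\mathbb E\theta_i$, so requirement (i) here gives exactly $a=\alpha$ for $n\geqslant\beta$.

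The key step is the verification of condition (ii), and I expect it to be the only step that is not a mechanical substitution. On the event $\{\xi_i\leqslant-\varkappa\}$ we have $p\theta_i\geqslant Z_i+\varkappa\geqslant\varkappa$, so $\{\xi_i\leqslant-\varkappa\}\subseteq\{\theta_i\geqslant\varkappa/p\}$; moreover $|\xi_i|=p\theta_i-Z_i\leqslant p\theta_i$ there, since $Z_i\geqslant0$. Combining the inclusion of events with this domination gives
\[
\mathbb E\bigl(|\xi_i|\ind_{\{\xi_i\leqslant-\varkappa\}}\bigr)\leqslant p\,\mathbb E\bigl(\theta_i\ind_{\{\theta_i\geqslant\varkappa/p\}}\bigr),
\]
whence, after averaging and taking the supremum over $n\geqslant\beta$, requirement (ii) yields $\varepsilon=p\epsilon$. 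Discarding the nonnegative claim contribution $Z_i$ in $|\xi_i|$ is precisely where the hypothesis $Z_i\geqslant0$ is used, and it is the main obstacle in the reduction.

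For conditions (iii) and (iv) I would use $e^{\gamma\xi_i}=e^{\gamma Z_i}e^{-\gamma p\theta_i}\leqslant e^{\gamma Z_i}$ (as $\theta_i\geqslant0$) together with the trivial bound $\mathbb P(\xi_i\leqslant0)\leqslant1$, so that
\[
\mathbb P(\xi_i\leqslant0)+\mathbb E\bigl(e^{\gamma\xi_i}\ind_{\{\xi_i>0\}}\bigr)\leqslant 1+\mathbb E\,e^{\gamma Z_i}.
\]
Averaging and passing to the supremum (respectively maximum) gives $d_1=1+\nu_1$ and $d_2=1+\nu_2$. With these values the quantity $-\varDelta$ of \eqref{0} becomes $p\epsilon+\delta\gamma(1+\nu_1)\max\{\varkappa^2/2,\,2/\gamma^2\}-\alpha=-\hat{\varDelta}$, so the hypothesis $-\hat{\varDelta}<0$ is exactly condition \eqref{0}. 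Theorem~\ref{th1} then delivers $\psi(u)\leqslant\min\{1,\,c_1 e^{-\delta\gamma u}\}$; finally, since $\nu_2\geqslant1$ forces $d_2=1+\nu_2>1$, we are in the first branch of $S(b,d_2)$, giving $S(\beta,1+\nu_2)=\frac{1+\nu_2}{\nu_2}\bigl((1+\nu_2)^{\beta-1}-1\bigr)$, so that $c_1$ coincides with the stated constant $c_2$. Everything beyond the event-inclusion argument of condition (ii) is bookkeeping of constants.
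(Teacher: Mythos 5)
Your proposal is correct and follows essentially the same route as the paper: setting $\xi_i=Z_i-p\theta_i$, bounding $\mathbb{E}\bigl(|\xi_i|\ind_{\{\xi_i\leqslant-\varkappa\}}\bigr)\leqslant p\,\mathbb{E}\bigl(\theta_i\ind_{\{\theta_i\geqslant\varkappa/p\}}\bigr)$ via the domination $|\xi_i|\leqslant p\theta_i$ and the event inclusion coming from $Z_i\geqslant0$, using $\mathrm{e}^{\gamma\xi_i}\leqslant\mathrm{e}^{\gamma Z_i}$ and $\mathbb{P}(\xi_i\leqslant0)\leqslant1$ for conditions (iii)--(iv), and then invoking Theorem~\ref{th1} with $a=\alpha$, $b=\beta$, $c=\varkappa$, $h=\gamma$, $\varepsilon=p\epsilon$, $d_1=1+\nu_1$, $d_2=1+\nu_2$. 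Your explicit check that $d_2=1+\nu_2>1$ selects the first branch of $S(b,d_2)$, so that $c_1$ equals the stated $c_2$, is a detail the paper leaves implicit, but there is no substantive difference in approach.
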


\begin{proof}[Proof of Corollary~\ref{ccc2}] Let $\xi_i=Z_i-p\,\theta
_i$ for all $i\in\mathbb{N}$. Then obviously
%
\begin{equation}
\label{cc1} \frac{1}{n}\sum\limits
_{i=1}^{n}
\mathbb{E}\xi_i\leqslant-\alpha\quad \text{if}\ n\geqslant\xch{\beta,}{\beta}
\end{equation}
by condition (i) of the corollary.

Further, by conditions (iii) and (iv), we have
%
\begingroup
\abovedisplayskip=7.5pt
\belowdisplayskip=7.5pt
\begin{align}
\sup\limits
_{n\geqslant\beta}\frac{1}{n}\sum\limits
_{i=1}^{n}
\mathbb {E} \bigl({\rm e}^{\gamma\xi_{i}}\ind_{\{\xi_i>0\}} \bigr)&\leqslant\sup
\limits
_{n\geqslant\beta}\frac{1}{n}\sum\limits
_{i=1}^{n}
\mathbb {E} {\rm e}^{\gamma Z_{i}}\leqslant\nu_1,\label{cc2}
\\
\max\limits
_{1\leqslant n\leqslant\beta-1}\frac{1}{n}\sum\limits
_{i=1}^{n}
\mathbb{E} \bigl({\rm e}^{\gamma\xi_{i}}\ind_{\{\xi_i>0\}} \bigr)&\leqslant
\xch{\nu_2,}{\nu_2} \label{cc2+}
\end{align}
because of the nonnegativity of the inter-occurrence times $\theta_i,
i\in\mathbb{N}$.

For the use of Theorem~\ref{th1}, it remains to estimate
\[
\sup\limits
_{n\geqslant\beta}\frac{1}{n}\sum\limits
_{i=1}^{n}
\mathbb {E} \xch{\bigl(|\xi_{i}|\ind_{\{\xi_i\leqslant-c\}} \bigr),}{(|\xi_{i}|\ind_{\{\xi_i\leqslant-c\}} )}
\]
for some suitable positive $c$.

Choosing $c=\varkappa$ we get
%
\begin{align}
\label{cccc} \sup\limits
_{n\geqslant\beta}\frac{1}{n}\sum\limits
_{i=1}^{n} \mathbb {E} \bigl(|\xi_{i}|
\ind_{\{\xi_i\leqslant-\varkappa\}} \bigr)&\leqslant p \sup\limits
_{n\geqslant\beta} \frac{1}{n}\sum
\limits
_{i=1}^{n}\mathbb {E} \bigl( \theta_i
\ind_{\{Z_i-p\,\theta_i\leqslant-\varkappa\}} \bigr)
\nonumber
\\
& =p \sup\limits
_{n\geqslant1}\frac{1}{n}\sum\limits
_{i=1}^{n}
\mathbb {E} \bigl(\theta_i\ind_{\{\theta_i\geqslant\frac{1}{p}(Z_i +\varkappa)\}
} \bigr)
\nonumber
\\
&\leqslant p \sup\limits
_{n\geqslant1}\frac{1}{n}\sum\limits
_{i=1}^{n}\mathbb{E} \bigl(\theta_i
\ind_{\{\theta_i\geqslant\frac
{\varkappa}{p}\}} \bigr)
\nonumber
\\
& \leqslant p\, \epsilon.
\end{align}
\endgroup

The obtained inequalities \eqref{cc1}, \eqref{cc2}, \eqref{cc2+} and
\eqref{cccc} imply that r.v.'s $\{\xi_1,\xi_2,\ldots\}$ satisfy
conditions (i)--(iv) of Theorem~\ref{th1} with
\[
a=\alpha,\quad b=\beta,\quad c=\varkappa,\quad h=\gamma,\quad d_1=1+\nu_1,\quad d_2=1+\nu_2\quad \mbox{and}\quad \varepsilon=p\,\epsilon.
\]
The assertion of the corollary follows now from Theorem~\ref{th1}.
\end{proof}

\section{Examples}\label{ex}

In this section, we present four examples. The first two examples show
the applicability of Theorem~\ref{th1}. The third example demonstrates
how to get the exponential bound for the ruin probability applying
Corollary~\ref{ccc2}. The last example shows that the Lundberg-type
inequality of the form $\psi(u)\leqslant\varrho_1{\rm e}^{-\varrho_2 u}$, $u\geqslant
0$, with $\varrho_1=1$ and  a positive constant $\varrho_2$ is impossible\querymark{Q2} if the
inhomogeneous renewal risk
model is ``good'' only on average.

\begin{ex}\label{ex11} Suppose that $\{\xi_1,\xi_2,\ldots\}$
are independent r.v.'s such that:
\begin{itemize}\setlength\itemsep{0pt}
\item[$\bullet$] $\xi_i$ are uniformly distributed on interval $[0,2]$
for all $i\,\equiv\,1\, \mbox{mod}\,3$;
\item[$\bullet$] $\xi_i$ are uniformly distributed on interval
$[-2,0]$ for all $i\,\equiv\,2\, \mbox{mod}\,3$;
\item[$\bullet$] $\overline{F}_{\xi_i}(x)=\ind_{(-\infty,-2)}(x)+\mbox
{e}^{-x-2}\ind_{[-2,\infty)}(x)$ if $i\,\equiv\,0\, \mbox{mod}\,3$.
\end{itemize}
\end{ex}

We can see that the presented sequence $\{\xi_1,\xi_2,\ldots\}$ has
three subsequences. Two of them generate random walks with negative
drifts, and one subsequence generates random walk with a positive
drift. Fortunately, sequence $\{\xi_1,\xi_2,\ldots\}$ has a negative
drift on average. Therefore, we can use Theorem~\ref{th1} to get the
exponential bound for $\mathbb{P}(M_\infty>x)$.

It is evident that
\begin{eqnarray*}
\mathbb{E}\xi_i= %
\begin{cases}
1& \text{if}\ i\,\equiv\,1\, \mbox{mod}\,3,\\
-1& \text{if}\ i\,\equiv\,2\, \mbox{mod}\,3\ \mbox{or}\ i\,\equiv\,0\,\mbox{mod}\,3.
\end{cases} %
\end{eqnarray*}
Therefore, after some calculations, we get
%
\begin{equation}
\label{*} \frac{1}{n}\sum\limits
_{i=1}^n
\mathbb{E}\xi_i\leqslant-\frac{1}7 \ \mbox {for}\ n
\geqslant7.
\end{equation}
Additionally,
%
\begin{equation}
\label{**} \sup\limits
_{n\geqslant1}\frac{1}{n}\sum\limits
_{i=1}^{n} \mathbb{E} \bigl(|\xi_i|
\ind_{\{\xi_i\leqslant-2\}} \bigr)=0
\end{equation}
and
\begin{eqnarray*}
\mathbb{E} \bigl(\mbox{e}^{\frac{4}{5}\xi_i}\ind_{\{\xi_i>0\}} \bigr)=
\begin{cases}
\frac{5}{8} (\mbox{e}^{8/5}-1 )<2.48 &\mbox{if}\ \  i\,\equiv\, 1\, \mbox{mod}\,3,\\
0 &\mbox{if}\ \  i\,\equiv\,2\, \mbox{mod}\,3,\\
5/\mbox{e}^2<0.68 &\mbox{if}\ \  i\,\equiv\,0\, \mbox{mod}\,3.
\end{cases} %
\end{eqnarray*}
The last expression implies
%
\begin{align}
&\sup\limits
_{n\geqslant7}\frac{1}{n}\sum\limits
_{i=1}^{n}
\bigl(\mathbb{P}(\xi_i\leqslant0)+\mathbb{E} \bigl(
\mbox{e}^{\frac{4}{5}\xi
_i}\ind_{\{\xi_i>0\}} \bigr) \bigr) <1.79,\label{***}
\\
& \max\limits
_{1\leqslant n\leqslant6}\frac{1}{n}\sum\limits
_{i=1}^{n}
\bigl( \mathbb {P}(\xi_i\leqslant0)+\mathbb{E} \bigl(
\mbox{e}^{\frac{4}{5}\xi_i} \ind_{\{
\xi_i>0\}} \bigr) \bigr) <2.48.\label{****}
\end{align}

By \eqref{*}--\eqref{****}, we conclude that conditions of Theorem~\ref
{th1} hold with
$a=1/7$, $b=7$, $c=2$, $\varepsilon=0$, $h=4/5$, $d_1=1.8$ and
$d_2=2.5$. Therefore,
\[
-\varDelta=\varepsilon+\delta h d_1\max \biggl\{\frac{c^2}{2},
\frac
{2}{h^2} \biggr\}-a=\frac{9}{2}\,\delta-\frac{1}7=-
\xch{\frac{1}{14},}{\frac{1}{14}}
\]
if $\delta=1/63$. It follows now from Theorem~\ref{th1} that
\begin{align*}
\mathbb{P}(M_\infty>x)&\leqslant\min \biggl\{1,\, \biggl(d_2
\frac{d_2^{\,
6}-1}{d_2-1}+\frac{\exp\{-\delta{h}\varDelta b\}}{1-\exp\{-\delta{h}\varDelta
\}} \biggr)\,{\rm e}^{-\delta{h} x} \biggr\}
\\
&\leqslant\min \xch{\bigl\{1,\, 1502 \exp\{-0.01269 x\} \bigr\},}{\bigl\{1,\, 1502 \exp\{-0.01269 x\} \bigr\}}
\end{align*}
for all positive $x$.

The last inequality works if $x\geqslant578$. Though the obtained
bound is not as good as one would prefer, it is still exponential. Its
weakest point is the large constant before the main term. By Theorem~\ref{th1}, the value of this constant is closely related to the
behavior of the first elements of the sequence $\{\xi_1,\xi_2,\ldots\}
$. In our case, the first elements of $\{\xi_1,\xi_2,\ldots\}$ increase
this constant because the subsequence $\{\xi_1, \xi_4,\xi_7,\ldots\}$
has a positive drift. The second example shows that the better
exponential bound can be obtained from Theorem~\ref{th1} in the case
when only some of the r.v.'s $\{\xi_1,\xi_2,\ldots\}$ drag the model to
the positive side.

\begin{ex}\label{ex12}Suppose that $\{\xi_1,\xi_2,\ldots\}$ are
independent r.v.'s such that:
\[
\mathbb{P}(\xi_i=-1)=1-\frac{1}{i+1}\quad \text{and}\quad \mathbb{P}(\xi_i=1)=\frac{1}{i+1}\quad \text{for}\ i\in\xch{\{1,2,\ldots\}.}{\{1,2,\ldots\}}
\]
\end{ex}

For all $i\geqslant1$, we have
\[
\mathbb{E}\xi_i=\frac{2}{i+1}-1\quad \text{and}\quad \mathbb{E} \bigl({\rm e}^{\xi_i}\ind_{\{\xi_i>0\}} \bigr)=
\frac{{\rm e}}{i+1}.
\]
Consequently,
\begin{align*}
\frac{1}{n}\sum\limits_{i=1}^n\mathbb{E}\xi_i&\leqslant-\frac{5}{18}\quad \text{if}\ n\geqslant3,
\\
\sup\limits
_{n\geqslant3}\frac{1}{n}\sum\limits
_{i=1}^{n}
\bigl(\mathbb{P}(\xi_i\leqslant0)+\mathbb{E} \bigl(
\mbox{e}^{\xi_i}\ind_{\{\xi
_i>0\}} \bigr) \bigr) &=\frac{13{\rm e}+23}{36}<1.621,
\\
\max\limits
_{1\leqslant n\leqslant6}\frac{1}{n}\sum\limits
_{i=1}^{n}
\bigl(\mathbb{P}(\xi_i\leqslant0)+\mathbb{E} \bigl(\mbox
{e}^{\xi_i}\ind_{\{\xi_i>0\}} \bigr) \bigr)&=\frac{{\rm e}+1}{2}<1.86.
\end{align*}

Due to the derived bounds conditions of Theorem~\ref{th1} hold with
\[
a=5/18,\quad\!\! b=3,\quad\!\! c=11/10,\quad\!\! \varepsilon=0,\quad\!\! h=1,\quad\!\! d_1=1.625\quad\!\! \text{and}\quad\!\! d_2=\frac{{\rm e}+1}{2}.
\]
In this case, we have
\[
-\varDelta=\varepsilon+\delta h d_1\max \biggl\{\frac{c^2}{2},
\frac
{2}{h^2} \biggr\}-a=\frac{13}{4}\,\delta-\frac{5}{18}<\xch{0,}{0}
\]
for $\delta<10/117$.

If we chose $\delta=1/20$, then by Theorem~\ref{th1}, we have
\begin{eqnarray*}
\mathbb{P}(M_\infty>x)\leqslant\min \bigl\{1,\, 178 \exp\{- x/20\} \bigr
\},\quad x\geqslant0.
\end{eqnarray*}

As was stated before, the next example shows the possibility of the
exponential bound for the ruin probability in the case when the
inhomogeneous renewal risk model satisfies net profit condition on average.

\begin{ex}\label{ex13} Let us consider the inhomogeneous risk
model which is generated by uniformly distributed on $[1,3]$
inter-occurrence times $\theta_1,\theta_2,\dots$,
constant premium rate $p=2$ and a sequence of the claim amounts $\{
Z_1,Z_2,\ldots\}$ such that $Z_1=Z_2=0$, $Z_3=Z_4=4$ with probability 1
and
\begin{eqnarray*}
&&\overline{F}_{Z_i}(x)=\ind_{(-\infty, 0)}(x)+{\rm e}^{-x}
\biggl(1+\frac
{x}{i}\, \biggr)\ind_{[0,\infty)}(x),\quad i\geqslant5.
\end{eqnarray*}
\end{ex}

In this case, we have
\begin{align*}
\mathbb{E}Z_i&=1+\frac{1}{i},\quad i \geqslant5,\\
\mathbb{E}\,{\rm e}^{\gamma Z_i}&=\frac{i-1}{i}\frac{1}{1-\gamma
}+
\frac{1}{i}\frac{1}{(1-\gamma)^2},\quad i\geqslant5,\ \gamma\in(0,1).
\end{align*}
Consequently,
\begin{eqnarray*}
&& \frac{1}{n}\sum\limits
_{i=1}^{n}(\hspace{0.5mm}
\mathbb{E}Z_i-p\mathbb{E}\theta _i)\leqslant-{2}\quad \text{if}\ \xch{n\geqslant1}{n\geqslant1,}
\end{eqnarray*}
and
\[
\sup\limits
_{n\geqslant1}\frac{1}{n}\sum\limits
_{i=1}^{n}
\mathbb{E} \bigl({\rm e}^{Z_i/3} \bigr)\leqslant2.4.
\]

The obtained inequalities imply conditions of Corollary~\ref{ccc2} with
\[
\alpha=2,\quad \beta=1,\quad \varkappa=6,\quad \epsilon=0,\quad \gamma=\frac{1}3,\quad \nu_1=2.4,\quad \nu_2=1.
\]
Therefore, for the described model,
\[
-\widehat{\varDelta}=\frac{102}{5}\, \delta-2<\xch{0,}{0}
\]
if $\delta<10/102$.

If we choose $\delta=9/102$, then $\widehat{\varDelta}=1/5$, and the
assertion of Corollary~\ref{ccc2} implies the following Lundberg-type
inequality for the model
\[
\psi(u)\leqslant\min \bigl\{1,\, 170\,{\rm e}^{- 0.029\,u} \bigr\},\quad u\geqslant0.
\]

If we choose $\delta=5/102$, then $\widehat{\varDelta}=1$, and Corollary~\ref{ccc2} implies that
\[
\psi(u)\leqslant\min \bigl\{1,\, 61\,{\rm e}^{- 0.016\,u} \bigr\},\quad u\geqslant0.
\]

\begin{remark}
It is clear that we can get a lot of
different Lundberg-type inequalities for the same model because there
exist infinitely many collections of constants $\{ \alpha, \beta,
\varkappa, \epsilon, \gamma, \nu_1, \nu_2, \delta\}$ satisfying
conditions of Corollary~\ref{ccc2}. It follows from the construction of
the bound in Corollary~\ref{ccc2} that we get the better bound for the
smaller constants $\beta, \varkappa, \epsilon, \nu_1, \nu_2 $ and for
larger constants $\alpha, \gamma$. If the collection of the constants
$\{ \alpha, \beta, \varkappa, \epsilon, \gamma, \nu_1, \nu_2\}$ is
quite ``unfriendly'', then we can still get an exponential bound for
the ruin probability but with unsatisfiably small $\delta$.
All possible exponential bounds for the ruin probability have the form
$\varrho_1\exp\{-\varrho_2 u\}$ with some positive constants $\varrho
_1$ and $\varrho_2$. Theorem~\ref{tt2} shows that $\varrho_1=1$ in the
case of the homogeneous renewal risk model satisfying the net profit
condition. If the net profit condition holds on average (see condition
(i) of Corollaries~\ref{ccc1} or~\ref{ccc2}), then it is impossible to
get the exponential bound for $\psi(u)$ with $\varrho_1=1$ in general.
The following simple example confirms this.
\end{remark}

\begin{ex}\label{ex14}
Let us consider the inhomogeneous risk
model with $p=1$ such that $Z_1=Z_2=10$, $Z_i=0$ for $i\geqslant3$,
and $\theta_i=1$ for $i\geqslant1$ almost surely.
\end{ex}

The model under consideration is inhomogeneous but satisfies the net
profit condition on average and all other conditions of, for instance,
Corollary~\ref{ccc2}. On the other hand, the model is degenerate. It is
not difficult to obtain the exact values of the ruin probability.
Namely, expression \eqref{pp} implies that
\begin{align*}
\psi(u)&=1\quad \text{if}\ 0\leqslant u< 18,\\
\psi(u)&=0\quad \text{if}\ u \geqslant18.
\end{align*}
We can see the graph of the function $\psi$ in Figure~\ref{p4}. All the
best possible exponential bounds for $\psi(u)$ must go through the
point A(18,1) (see colored curves in Figure~\ref{p4}). Therefore, the
upper bounds of the form $\varrho_1\exp\{-\varrho_2 u\}$ should satisfy
the condition
\[
\varrho_1{\rm e}^{-18 \,\varrho_2}\geqslant1.
\]

Hence, it is evident that
\[
\varrho_1\geqslant{\rm e}^{18\,\varrho_2 }>1.
\]

\begin{figure}
\includegraphics{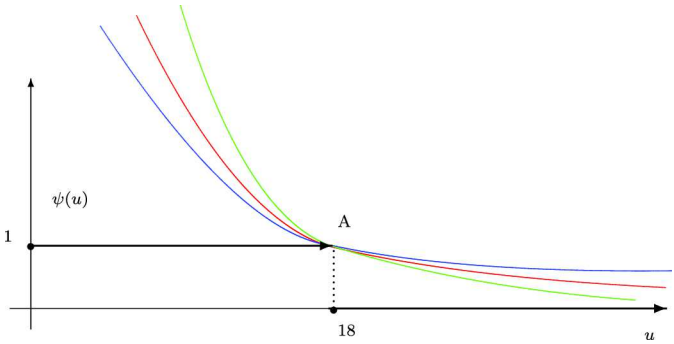}
\caption{Ruin probability in the model of Example~\ref{ex14}}\label{p4}
\end{figure}

\section{Concluding remarks}\label{cr}

In the paper, the problem of the estimating of the ruin probability for
the inhomogeneous renewal risk models is considered. It is evident that
this problem is closely related to the bounds for the tail probability
of the supremum of an inhomogeneous random walk. The upper bound of the
exponential type $\varrho_1{\rm e}^{-\varrho_2u}$ is derived for the
renewal risk models satisfying the \textit{net profit condition on
average}. The positive constants $\varrho_1$ and $\varrho_2$ depend on
the constants describing the model. Unfortunately, the obtained
estimates are not sharp enough. We guess that it is possible to get
sharper exponential bounds for the ruin probabilities but for narrower
class of the inhomogeneous renewal risk models.

\section*{Acknowledgments} We would like to thank three anonymous
referees for the detailed and helpful comments on the previous versions
of the manuscript.


\end{document}